\newtheorem{theorem}{Theorem}[section]
\newtheorem{lemma}[theorem]{Lemma}
\theoremstyle{definition}
\begin{document}

\title{A remark on the Strichartz Inequality in one dimension}
\author{ Ryan Frier and Shuanglin Shao }

\begin{abstract}
In this paper, we study the extremal problem for the Strichartz inequality for the Schr\"{o}dinger equation on $\mathbb{R}^2$. We show that the solutions to the associated Euler-Lagrange equation are exponentially decaying in the Fourier space and thus can be extended to be complex analytic. Consequently we provide a new proof to the characterization of the extremal functions: the only extremals are Gaussian functions, which was investigated previously by Foschi \cite{Foschi} and Hundertmark-Zharnitsky \cite{Hund}.  
\end{abstract}
\date{\today}
\maketitle

\section{Introduction}

To begin, we note that the Strichartz inequality for an arbitrary dimension $d$ is

\begin{equation*}
\begin{split}
\|e^{it\Delta}f\|_{2+4/d}\leq C_d\|f\|_2,
\end{split}
\end{equation*}

where $\|\ f\|_p=\left(\int_{\mathbb{R}^d} |f(x)|^p\ dx \right)^{1/p}$, and $e^{it\Delta}f=\frac{1}{(2\pi)^d} \int_{\mathbb{R}^d} e^{ix\cdot \xi+i|\xi |^2t}\widehat{f}(\xi)d\xi$, see e.g., \cite{Keel-Tao:1998:endpoint-strichartz,Tao:2006-CBMS-book}. Strichartz's Inequality has long been studied.  The original proof of Strichartz inequality is due to Robert Strichartz in \cite{Strichartz} in 1977.

Define the Fourier transform as $\widehat{f}(\xi)=\int_{\mathbb{R}^d}e^{-ix \cdot \xi}f(x)dx$ and the space-time Fourier transform as $\tilde{F}(\tau, \xi)=\int_{\mathbb{R}\times \mathbb{R}^n}e^{i(\tau t+ x \cdot \xi)} F(t, x) \ dt \ dx$.  Note that in the case of $d=1$ we have 

\begin{equation}\label{eq-1}
\begin{split}
\left\|e^{it\Delta}f\right\|_6 &\leq C_1 \|f\|_2; \\
e^{it\Delta}f&=\frac{1}{2\pi} \int_\mathbb{R} e^{ix\xi+i\xi^2t}\widehat{f}(\xi)d\xi.
\end{split}
\end{equation}

Define
\begin{equation}
\begin{split}
C_1&=\sup \left\{\frac{\left\|e^{it\Delta} f \right\|_6}{\|f\|_2}: f \in L^2, f \neq 0 \right\}.
\end{split}
\end{equation}

We say that $f$ is an \textit{extremizer} or a \textit{maximzer} of the Strichartz inequality if $f\neq 0$ and $\|e^{it\Delta}f\|_6=C_1\|f\|_2$.   The extremal problem for the Strichartz inequality \eqref{eq-1} is (a) Whether there exists an extremzier for \eqref{eq-1}? (b) If it exists, what are the characterizations of extremizers, e.g., continuity and differentiability? What is the explicit formulation of extremizers? are they unique up to the symmetries of the inequality? In this note, we are mainly concerned with question (b).

By D. Foschi's 2007 paper \cite{Foschi} we know that the maximizers are Gaussian functions of the form $f(x)=e^{Ax^2+Bx+C}$, where $A, B, C \in \mathbb{C}$, and $\Re\{A\}<0$ up to the symmetries of the Strichartz inequality.  In particular, according to Foschi, $f(x)=e^{-|x|^2}$ is a maximizer in dimension $1$.  Thus $f$ must satisfy $(1)$, and obtains equality with $C_1$. 

Hundertmark and Zharnitsky in \cite{Hund} showed a new representation using an orthogonal projection operator for dimension $1$ and $2$. The representation that was found is

\begin{equation*}
    \begin{split}
        \int_\mathbb{R} \int_{\mathbb{R}} \left|e^{it\Delta}f(x) \right|^6 \ dx dt&= \frac{1}{2\sqrt{3}} \langle f \otimes f \otimes f, P_1(f \otimes f \otimes f) \rangle_{L^2(\mathbb{R}^3)}, \\
        \int_\mathbb{R} \int_{\mathbb{R}^2} \left| e^{it\Delta}f(x) \right|^4 \ dxdt&= \frac{1}{4} \langle f \otimes f, P_2(f \otimes f) \rangle_{L^2(\mathbb{R}^4)}
        \end{split}
\end{equation*}

for dimensions $d=1$ and $d=2$, respectively, where $P_1, P_2$ are certain projection operators. Using this, they were able to obtain the same results.  In \cite{Kunze} Kunze showed that such a maximizer exists in dimension $1$.  In \cite{Shao2}, the second author showed the existence of a maximizer in all dimensions for the Strichartz inequalities for the Schrödinger equation. Likewise, in \cite{Silva}, Brocchi, Silva, and Quilodrán investigated sharp Strichartz inequalities for fractional and higher order Schrödinger equations.  There they discussed the rapid $L^2$ decay of extremizers, which we will also discuss and use it to establishing a characterization of extremizers.

We will take inspiration from \cite{Shao} to show a different method of proving that extremizers are Gaussians. More precisely, in this note, we are interested in the problem of how to characterize extremals for \eqref{eq-1} via the study of the associated Euler-Lagrange equation. We show that the solutions of this generalized Euler-Lagrange equation enjoy a fast decay in the Fourier space and thus can be extended to be complex analytic, see Theorem \ref{thm-complex-analyticity}. Then as an easy consequence, we give an alternative proof that all extremal functions to \eqref{eq-1} are Gaussians based on solving a functional equation of extremizers derived in Foschi \cite{Foschi}, see \eqref{eq-multiplicative} and Theorem \ref{thm-quadratic}. The functional equality \eqref{eq-multiplicative} is a key ingredient in Foschi's proof in \cite{Foschi}. To prove $f$ in \eqref{eq-multiplicative} to be a Gaussian function, local integrability of $f$ is assumed in \cite{Foschi}, which is further reduced to measurable functions in Charalambides \cite{Charalambides:2012:Cauchy-Pexider}.

Let $f$ be an extremal function to \eqref{eq-1} with the constant $C_1$. Then $f$ satisfies the following generalized Euler-Lagrange equation,
\begin{equation}\label{eq-strichartzequ}
\omega \langle g, f \rangle = \mathcal{Q}(g,f,f,f,f,f), \text{for all }g \in L^2,
\end{equation}
where $\omega= \mathcal{Q}(f,f,f,f,f,f)/\|f\|_{L^2}^2 >0$ and $\mathcal{Q}(f_1,f_2,f_3,f_4,f_5,f_6)$ is the integral
\begin{equation}\label{eq-21} 
\begin{split}
& \int_{\mathbb{R}^6} \overline{\widehat{f_1}}(\xi_1) \overline{\widehat{f_2}}(\xi_2)\overline{\widehat{f_3}}(\xi_3)  \widehat{f_4}(\xi_4)  \widehat{f_5}(\xi_5)\widehat{f_6}(\xi_6) \delta (\xi_1 + \xi_2 +\xi_3-\xi_4-\xi_5-\xi_6) \\
&\qquad\qquad \qquad \times \delta(\xi_1^2 + \xi_2^2 +\xi_3^2- \xi_4^2-\xi_5^2-\xi_6^2) d\xi_1 d\xi_2 d\xi_3 d\xi_4 d\xi_5 d\xi_6,
\end{split}
\end{equation}
for $f_i \in L^2(\mathbb{R})$, $1\le i\le 6$, $\delta (\xi) = (2\pi)^{-d} \int_{\mathbb{R}^d} e^{i \xi \cdot x}dx$ in the distribution sense, $d=1,2$. The proof of \eqref{eq-strichartzequ} is standard; see e.g.  \cite[p. 489]{Evans:2010:PDE-book} or \cite[Section 2]{Hundertmark-Lee:2012:nonlocal-variational-problems-nonlinear-optics} for similar derivations of Euler-Lagrange equations. 

\begin{theorem}\label{thm-complex-analyticity}
If $f$ solves the generalized Euler-Lagrange equation \eqref{eq-strichartzequ} for some $\omega>0$, then there exists $\mu>0$ such that 
$$ e^{\mu |\xi|^2} \widehat{f} \in L^2(\mathbb{R}). $$
Furthermore $f$ can be extended to be complex analytic on $\mathbb{C}$.  
\end{theorem}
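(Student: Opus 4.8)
The strategy is to rewrite the Euler–Lagrange equation in physical/Fourier variables so that $\widehat{f}$ is expressed as a multilinear convolution of lower-regularity pieces, then bootstrap. First I would take $g$ in \eqref{eq-strichartzequ} to range over a dense family (e.g. Schwartz functions) and unwind the two $\delta$'s. Using the classical fact that for the parabola the measure $\delta(\xi_1+\xi_2-\eta_1-\eta_2)\delta(\xi_1^2+\xi_2^2-\eta_1^2-\eta_2^2)$ restricted to a fixed "output" $(\sigma,\rho)=(\xi_1+\xi_2,\xi_1^2+\xi_2^2)$ is a finite measure with an explicit Jacobian (the two-sheeted hyperbola/ellipse parametrization that underlies the Hundertmark–Zharnitsky identity), one sees that \eqref{eq-strichartzequ} can be written as
\begin{equation*}
\omega\,\widehat{f}(\xi) = \bigl(K * (\text{quintilinear expression in }\widehat{f})\bigr)(\xi),
\end{equation*}
i.e. $\widehat f$ equals $\omega^{-1}$ times a $5$-fold "convolution along the paraboloid" of $\widehat f$ with itself. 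Concretely, after integrating out three of the six frequencies against the two delta constraints, $\widehat f(\xi_1)$ is represented as an integral over $(\xi_2,\xi_3,\xi_4,\xi_5,\xi_6)$ constrained to a $3$-dimensional set, with a bounded (indeed smooth away from a lower-dimensional set) density; the point is that this integral operator gains integrability/decay.

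The core of the argument is then a two-step bootstrap. \emph{Step 1 (from $L^2$ to rapid polynomial decay, or to $L^\infty$ with decay):} since convolution of several $L^2$ functions along a curved surface lands in a better space (Strichartz/$X^{s,b}$-type or simply Young + the curvature-induced smoothing), one upgrades $\widehat f \in L^2$ to $\widehat f \in L^p$ for all $p$, and to $\langle\xi\rangle^N \widehat f \in L^2$ for every $N$. This is the kind of "rapid $L^2$ decay of extremizers" already referenced via Brocchi–Silva–Quilodrán in the introduction. \emph{Step 2 (from polynomial to exponential decay):} here one exploits the algebraic structure of the constraint. If $\xi_1$ is large, then on the constraint set $\xi_1=\xi_4+\xi_5+\xi_6-\xi_2-\xi_3$ and $\xi_1^2 = \xi_4^2+\xi_5^2+\xi_6^2-\xi_2^2-\xi_3^2$, at least one of $\xi_2,\ldots,\xi_6$ must be comparably large — more precisely $|\xi_1| \lesssim \max_j |\xi_j|$ over $j\in\{2,\dots,6\}$, and in fact one can show $|\xi_1|^2 \le C\sum_{j\ge 2}|\xi_j|^2$ on the constraint. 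Feeding in an assumed bound $e^{\mu|\xi|^2}\widehat f \in L^2$ with a small $\mu$, the right-hand side is then controlled by $e^{-c\mu|\xi_1|^2}$ times a convergent integral (the "loss" from distributing the quadratic exponential among five factors is compensated because the factors' squared frequencies sum to at least $|\xi_1|^2$ up to a constant). A fixed-point / continuity-in-$\mu$ argument then promotes this to an actual $\mu>0$ for which $e^{\mu|\xi|^2}\widehat f\in L^2$: one sets up the map on the weighted space and shows it is a contraction (or uses the a priori polynomial decay to start the iteration and track how the admissible exponential weight improves at each step).

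Once $e^{\mu|\xi|^2}\widehat f \in L^2$ is established, complex analyticity is immediate: for $z\in\mathbb{C}$ the function $f(z) = \frac{1}{2\pi}\int_{\mathbb{R}} e^{iz\xi}\widehat f(\xi)\,d\xi$ converges absolutely and locally uniformly because $|e^{iz\xi}| = e^{-\Im(z)\xi} \le e^{|\Im z|\,|\xi|}$ is dominated by $e^{\mu|\xi|^2/2}\cdot(\text{integrable})$ via Cauchy–Schwarz; differentiation under the integral sign (Morera/dominated convergence) gives entirety. I expect \textbf{Step 2 to be the main obstacle}: one must find the right quantitative form of "large output frequency forces large input frequencies" on the constraint surface, choose the functional-analytic setup (which weighted norm, how to handle the five-fold nonlinearity without losing too much in Hölder), and close the bootstrap so that $\mu$ does not degenerate to $0$ in the limit. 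The linear-in-output, quintilinear-in-input structure is favorable (the nonlinearity only helps the decay), but making the constants explicit enough to get a genuine contraction on a weighted $L^2$ ball is the delicate part.
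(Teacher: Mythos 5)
Your overall shape is right (weighted bootstrap on $\widehat f$ driven by the Euler--Lagrange equation, using the constraint inequality $\xi_1^2\le\sum_{j\ge2}\xi_j^2$, and then analyticity from $e^{\mu|\xi|^2}\widehat f\in L^2$ exactly as in the paper), but the heart of your Step 2 has a genuine gap. The claim that on the constraint set the right-hand side is controlled by ``$e^{-c\mu|\xi_1|^2}$ times a convergent integral'' is false on the part of the constraint where one input frequency is comparable to the output: for instance $\xi_4=\xi_1$, $\xi_2=\xi_3=\xi_5=\xi_6=0$ satisfies both constraints, and there the weight factor $e^{\mu\xi_1^2-\mu\sum_{j\ge2}\xi_j^2}$ equals $1$, not $e^{-c\mu\xi_1^2}$. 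So the inequality $\xi_1^2\le\sum_{j\ge2}\xi_j^2$ only shows that the weighted multilinear form is bounded by the unweighted one (this is exactly the paper's inequality \eqref{eq-3}); it yields no smallness whatsoever, and the resulting estimate is linear in the weighted norm with a constant of order the Strichartz constant, so no contraction closes. In addition, setting up a fixed point ``on the weighted ball'' presupposes that the weighted norm is finite, which is precisely what is to be proved; your fallback of starting from polynomial decay does not repair this, since $\langle\xi\rangle^N$ decay for all $N$ does not make $e^{\mu\xi^2}\widehat f$ finite and you have not exhibited any quantitative self-improvement of the admissible exponential weight, so $\mu$ degenerates to $0$ exactly in the way you worry about.

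The paper closes these gaps with three ingredients you do not have. First, the regularized weight $F_{\mu,\varepsilon}(\xi)=\mu\xi^2/(1+\varepsilon\xi^2)$, which is bounded for each fixed $\varepsilon>0$, so all weighted quantities are a priori finite and no circularity arises; the passage $\varepsilon\to0$ is handled by a continuity/connectedness argument for $H(\varepsilon)=\|e^{F_{\mu,\varepsilon}}\widehat f\,1_{|\xi|\ge s^2}\|_2$ together with monotone convergence. Second, the smallness that drives the bootstrap comes not from the weight but from a frequency decomposition at scales $s$ and $s^2$ (with $\mu=s^{-4}$ so that $e^{\mu s^4}$ is harmless), combined with the bilinear Strichartz estimate of Lemma \ref{Pre-Bourgain}: when one factor lives at $|\xi|\le s$ and another at $|\xi|\ge Ns$ one gains $N^{-1/6}$, and the intermediate piece $s\le|\xi|\le s^2$ is small because $\|f_\sim\|_2\to0$ as $s\to\infty$ for a fixed $L^2$ function. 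Third, the terms containing at least two high-frequency factors are organized so that they appear with powers $\ge2$ of the unknown weighted norm $\|e^{F}\widehat f_>\|_2$ (Lemma \ref{le-bootstrap}), and it is the resulting polynomial inequality, not a contraction mapping, that yields the uniform bound. Without the bilinear separation gain and the $\varepsilon$-regularization, your scheme as written cannot be completed.
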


To prove this theorem, we follow the argument in \cite{Hundertmark-Shao:Analyticity-of-extremals-Airy-Strichartz}. Similar reasoning has appeared previously in \cite{Erdogan-Hundertmark-Lee:2011:exponential-decay-of-dispersion-management-soliton,Hundertmark-Lee:2009:decay-smoothness-for-solutions-dispersion-managed-NLS}. It relies on a multilinear weighted Strichartz estimate and a continuity argument. See Lemma \ref{le-multilinear} and Lemma \ref{le-bootstrap}, respectively. 

Next we prove that the extremals to \eqref{eq-1} are Gaussian functions. We start with the study of the functional equation derived in \cite{Foschi}. In \cite{Foschi}, the functional equation reads
\begin{equation}\label{eq-multiplicative}
f(x)f(y)f(z) = f(a)f(b)f(c),
\end{equation}
for any $x,y,z,a,b,c \in \mathbb{R}$ such that 
\begin{equation}\label{eq-4}
x+y+z =a+b+c, \quad x^2 + y^2+y^2 = a^2 + b^2+c^2,
\end{equation}
In \cite{Foschi}, it is proven that $f\in L^2$ satisfies \eqref{eq-multiplicative} if and only if $f$ is an extremal function to \eqref{eq-1}. Basically, this comes from two aspects. One is that in the Foschi's proof of the sharp Strichartz inequality only the Cauchy-Schwarz inequality is used at one place besides equality. So the equality in the Strichartz inequality \eqref{eq-1}, or equivalently the equality in Cauchy-Schwarz,  yields the same functional equation as \eqref{eq-multiplicative} where $f$ is replaced by $\hat{f}$. The other one is that the Strichartz norm for the Schr\"odinger equation enjoys an identity that \begin{equation}\label{eq-symmetry}
\|e^{it\Delta}f \|_{L^6(\mathbb{R}^2)} =C  \|e^{it\Delta}f^\vee \|_{L^6(\mathbb{R}^2)}
\end{equation}for some $C>0$.

In \cite{Foschi}, Foschi is able to show that all the solutions to \eqref{eq-multiplicative} are Gaussians under the assumption that $f$ is a locally integrable function. In \cite{Shao}, Jiang and the second author studied the two dimensional case of \eqref{eq-multiplicative} and proved that the solutions are Gaussian functions. These can be viewed as investigations of the Cauchy functional equations \eqref{eq-multiplicative} for functions supported on the paraboloids. To characterize the extremals for the Tomas-Stein inequality for the sphere in $\mathbb{R}^3$, in \cite{Christ-Shao:extremal-for-sphere-restriction-II-characterizations},  Christ and the second author study the functional equation of similar type for functions supported on the sphere and prove that they are exponentially affine functions.  In \cite{Charalambides:2012:Cauchy-Pexider}, Charalambides generalizes the analysis in \cite{Christ-Shao:extremal-for-sphere-restriction-II-characterizations} to some general hyper-surfaces in $\mathbb{R}^n$ that include the sphere, paraboloids and cones as special examples and proves that the solutions are exponentially affine functions. In \cite{Charalambides:2012:Cauchy-Pexider, Christ-Shao:extremal-for-sphere-restriction-II-characterizations}, the functions are assumed to be measurable functions.  

By the analyticity established in Theorem \ref{thm-complex-analyticity}, Equations \eqref{eq-multiplicative} and \eqref{eq-4} have the following easy consequence, which recovers the result in \cite{Foschi, Hund}. 

\begin{theorem}\label{thm-quadratic}
Suppose that $f$ is an extremal function to \eqref{eq-1}. Then 
\begin{equation}\label{eq-quadratic}
f(x) = e^{A |x|^2+ B \cdot x + C},
\end{equation} 
where $A, C \in \mathbb{C}, B \in \mathbb{C}$ and $\Re(A)<0$. 
\end{theorem}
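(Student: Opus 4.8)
\textbf{Proof proposal for Theorem \ref{thm-quadratic}.}

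The plan is to combine the analyticity from Theorem \ref{thm-complex-analyticity} with the functional equation \eqref{eq-multiplicative}--\eqref{eq-4}, so that the problem reduces to a purely algebraic identity for an entire function. By Theorem \ref{thm-complex-analyticity} we know that an extremal $f$ extends to an entire function on $\mathbb{C}$, and moreover $e^{\mu|\xi|^2}\widehat f\in L^2$ for some $\mu>0$, which in particular forces $f\not\equiv 0$ and $f$ to be nonvanishing on a set of positive measure; by Foschi's equivalence, $f$ (or rather $\widehat f$, but the roles are symmetric via \eqref{eq-symmetry}) satisfies \eqref{eq-multiplicative} for all real $(x,y,z,a,b,c)$ subject to the two constraints \eqref{eq-4}.

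First I would observe that, since $f$ is continuous and not identically zero, $f$ cannot vanish on any nonempty open interval: if it did, a short argument using the constraint surface \eqref{eq-4} (which, for fixed sum $s$ and fixed sum-of-squares $q$, is a one-parameter family connecting many triples) would propagate the zero and force $f\equiv 0$. Hence $g := \log f$ is well defined and holomorphic on a neighborhood of $\mathbb{R}$ minus the discrete zero set of $f$; in fact I would first work locally near a point where $f\ne 0$, where \eqref{eq-multiplicative} becomes the additive Cauchy--Pexider-type equation
\begin{equation*}
g(x) + g(y) + g(z) = g(a) + g(b) + g(c)
\end{equation*}
whenever $x+y+z = a+b+c$ and $x^2+y^2+z^2 = a^2+b^2+c^2$. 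The key step is then to differentiate: fixing the two constraints and moving along the curve of admissible $6$-tuples, one extracts from the functional equation that the second divided differences of $g$ are constant, i.e. $g''$ is an affine function of its argument, so $g$ is a cubic polynomial $g(t) = \alpha t^3 + \beta t^2 + \gamma t + \delta$. Substituting this cubic back into the functional equation and using $x+y+z=a+b+c$, $x^2+y^2+z^2=a^2+b^2+c^2$ while noting that $x^3+y^3+z^3$ is \emph{not} determined by these two symmetric functions, one concludes $\alpha = 0$, so $g$ is quadratic: $g(t) = A t^2 + B t + C$. This yields \eqref{eq-quadratic} locally, and by analytic continuation (both $f$ and $e^{At^2+Bt+C}$ are entire and agree on an interval) it holds on all of $\mathbb{C}$. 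Finally $\Re(A) < 0$ is forced by $f \in L^2(\mathbb{R})$, and $B\in\mathbb{C}$ is unconstrained.

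The main obstacle I anticipate is the differentiation step and the bookkeeping of the constraint manifold: one must justify that the admissible set of real $6$-tuples satisfying \eqref{eq-4} is rich enough — in particular that it contains smooth curves along which one can differentiate freely in enough independent directions — to conclude that $g$ is a polynomial of degree at most three and then to kill the cubic term. Concretely I would parametrize, for instance, $a=b=c = (x+y+z)/3$ is \emph{not} allowed unless the sum-of-squares matches, so instead one uses perturbations of the form $(x,y,z) \mapsto (x+\epsilon, y - \epsilon, z)$ together with a compensating triple on the other side, differentiate in $\epsilon$ at $\epsilon = 0$, and iterate; the analyticity of $g$ guarantees these derivatives exist and that a relation holding on an open set of parameters holds identically. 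Once the reduction to ``$g$ is an entire function whose restriction to $\mathbb{R}$ is a cubic polynomial'' is in place, the rest is elementary algebra, and the $L^2$ constraint closes the argument. An alternative, essentially equivalent route is to invoke directly the Charalambides--Christ--Shao classification of measurable solutions of \eqref{eq-multiplicative} on the paraboloid, but the point of this note is that analyticity makes the self-contained computational argument above completely elementary, avoiding any measure-theoretic input.
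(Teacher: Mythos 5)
Your proposal reaches the right conclusion, but by a genuinely different mechanism from the paper's. The paper also starts from the analyticity of Theorem \ref{thm-complex-analyticity} and from Foschi's functional equation \eqref{eq-multiplicative}--\eqref{eq-4}, and it invokes \cite[Lemma 7.9]{Foschi} to see that $f$ is nowhere zero on $\mathbb{R}$, so $\varphi=\log f$ is globally defined and real analytic; but instead of differentiating along the constraint manifold, it expands $\varphi$ in a Taylor series at $0$, uses the two constraints to cancel all terms of degree $\le 2$, and then plugs in the single explicit one-parameter family of admissible tuples $(x,-x,x)$ versus $\bigl(\tfrac{1+\sqrt5}{2}x,\tfrac{1-\sqrt5}{2}x,0\bigr)$; since $2+(-1)^k-\bigl(\tfrac{1+\sqrt5}{2}\bigr)^k-\bigl(\tfrac{1-\sqrt5}{2}\bigr)^k\neq0$ for every $k\ge3$, all Taylor coefficients of order $\ge3$ vanish simultaneously and $\varphi$ is a quadratic polynomial, with no calculus on the constraint set at all. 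Your differentiation scheme is the classical route (essentially Foschi's argument for regular solutions, and the one used in \cite{Shao} in two dimensions) and it does work, but two remarks: differentiating $g(x)+g(y)+g(z)$ along the circle $\{x+y+z=s,\ x^2+y^2+z^2=q\}$ yields $(y-z)g'(x)+(z-x)g'(y)+(x-y)g'(z)=0$, i.e.\ the second divided differences of $g'$ (not of $g$) vanish, which already forces $g'$ affine and hence $g$ quadratic --- so your intermediate claim ``second divided differences of $g$ are constant, i.e.\ $g''$ is affine, so $g$ is cubic'' is imprecise and the cubic-killing stage, while correct, is unnecessary; and your worry about $f$ vanishing on an open interval is moot once $f$ is entire and nontrivial, the relevant fact being the global nonvanishing on $\mathbb{R}$ supplied by \cite[Lemma 7.9]{Foschi} (if you work purely locally you should note that admissible tuples can be chosen in a small neighborhood of a point, since \eqref{eq-4} otherwise couples distant points). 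What your route buys is independence from a lucky explicit family of tuples and a closer tie to the general Cauchy--Pexider literature; what the paper's route buys is precisely the avoidance of the constraint-manifold bookkeeping you identify as the main obstacle, since after analyticity one algebraic substitution settles every coefficient at once. In both arguments $\Re(A)<0$ then follows from $f\in L^2$, as you say.
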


\section{Developing the Extremizer}
We want to show that if $f$ solves the generalized Euler-Lagrange equation \eqref{eq-strichartzequ}, then there exists some $\mu>0$ such that 

\begin{equation*}
\begin{split}
e^{\mu|\xi|^2}\widehat{f} \in L^2.
\end{split}
\end{equation*}

Furthermore, we can extend $f$ to be entire.  To begin, we note that by Foshi's paper \cite{Foschi}, we have for a maximizer $f$, $f(x)f(y)f(z)=F(x^2+y^2+z^2, x+y+z)$.  Thus for any $(x, y, z), (a, b, c) \in \mathbb{R}^3$ such that

\begin{equation}\label{eq-a1}
\begin{split}
x+y+z&=a+b+c
\end{split}
\end{equation}
and 
\begin{equation}\label{eq-a2}
\begin{split}
x^2+y^2+z^2&=a^2+b^2+c^2,
\end{split}
\end{equation}
then 
\begin{equation*}
\begin{split}
f(x)f(y)f(z)&=F(x^2+y^2+z^2, x+y+z) \\
&=F(a^2+b^2+c^2, a+b+c) \\
&=f(a)f(b)f(c).
\end{split}
\end{equation*}
That is, 
\begin{equation}\label{eq-a3}
\begin{split}
f(x)f(y)f(z)&=f(a)f(b)f(c).
\end{split}
\end{equation}

Let us assume that $f$ has an entire extension. Then $f$ restricted to $\mathbb{R}$ is real analytic. By \cite[Lemma 7.9]{Foschi}, such nontrival $f\in L^2$ is also nonzero. We prove the following theorem.

\begin{theorem}
If $f$ is a maximizer for $(1)$, then $f(x)=e^{Ax^2+Bx+C}$, where $A, B, C \in \mathbb{C}$.  
\end{theorem}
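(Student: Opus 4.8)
The plan is to exploit the analyticity of $f$ together with the functional equation \eqref{eq-a3} to force $\log f$ to be a quadratic polynomial. By Theorem \ref{thm-complex-analyticity}, $f$ extends to an entire function, and by \cite[Lemma 7.9]{Foschi} a nontrivial extremal $f\in L^2$ has no zeros on $\mathbb{R}$; combined with analyticity and the $L^2$ (hence decay) property one sees $f$ is nonvanishing on a neighborhood of $\mathbb{R}$, so we may locally write $g = \log f$ as a real-analytic (indeed holomorphic) function near the real line. Rewriting \eqref{eq-a3} additively gives $g(x)+g(y)+g(z) = g(a)+g(b)+g(c)$ whenever $(x,y,z)$ and $(a,b,c)$ satisfy the two constraints \eqref{eq-a1}--\eqref{eq-a2}.

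The core step is to differentiate this constrained functional equation. First I would fix the value of $x+y+z = s$ and $x^2+y^2+z^2 = q$ and observe that the level set is two-dimensional; moving along it, the quantity $g(x)+g(y)+g(z)$ is constant, so there is a function $G(q,s)$ with $g(x)+g(y)+g(z) = G(x^2+y^2+z^2,\, x+y+z)$ for all real $x,y,z$. Differentiating in $x$, $y$, $z$ separately yields
\begin{equation*}
g'(x) = 2x\, \partial_q G + \partial_s G, \qquad g'(y) = 2y\, \partial_q G + \partial_s G, \qquad g'(z) = 2z\, \partial_q G + \partial_s G,
\end{equation*}
all evaluated at the same point $(q,s)$. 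Subtracting, for $x \neq y$,
\begin{equation*}
\frac{g'(x) - g'(y)}{x - y} = 2\, \partial_q G(x^2+y^2+z^2,\, x+y+z),
\end{equation*}
and the left-hand side does not depend on $z$, so $\partial_q G$ is independent of its first slot's $z$-dependence; varying $z$ while keeping $x,y$ fixed shows $\partial_q G$ is actually constant, say $\partial_q G \equiv A$. Then $g'(x) - g'(y) = 2A(x-y)$ for all $x,y$, which forces $g'(x) = 2Ax + B$ for a constant $B$, and hence $g(x) = Ax^2 + Bx + C$ on the real line. By analytic continuation (both sides are entire and agree on $\mathbb{R}$) the same identity holds on $\mathbb{C}$, giving $f(x) = e^{Ax^2+Bx+C}$; the constraint $\widehat f \in L^2$ with the exponential weight, or simply $f \in L^2(\mathbb{R})$, forces $\Re(A) < 0$.

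The main obstacle is justifying the regularity and nonvanishing needed to pass to $g = \log f$ globally and to differentiate the constrained identity rigorously: one must ensure $f$ is zero-free on all of $\mathbb{R}$ (not merely nontrivial), which is where \cite[Lemma 7.9]{Foschi} and the analytic structure enter, and one must handle the branch of the logarithm consistently along the connected real line. A secondary technical point is the differentiation of the relation $g(x)+g(y)+g(z) = G(q,s)$: one should verify that $G$ is itself smooth as a function of $(q,s)$ on the relevant open region — this follows because the map $(x,y,z)\mapsto (x^2+y^2+z^2, x+y+z)$ is a submersion away from the diagonal $x=y=z$, so locally $G$ is a smooth function of the analytic data. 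Once these points are secured, the algebra collapsing $g$ to a quadratic is routine.
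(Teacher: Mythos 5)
Your proposal is correct in substance but takes a genuinely different route from the paper. The paper also sets $\varphi=\log f$ (justified, as you do, by analyticity plus \cite[Lemma 7.9]{Foschi}), but then works with the Taylor expansion of $\varphi$ at $0$: the constraints \eqref{eq-a1}--\eqref{eq-a2} cancel the terms of order $\le 2$ in \eqref{eq-a3}, and every remaining coefficient $\varphi^{(k)}(0)$, $k\ge 3$, is killed by testing against the explicit admissible family $a=x$, $b=-x$, $c=x$, $g=0$, $d=\tfrac{1+\sqrt5}{2}x$, $e=\tfrac{1-\sqrt5}{2}x$, for which $2x^k+(-x)^k-d^k-e^k-0^k$ is a nonzero multiple of $x^k$ for every $k\ge3$. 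You instead differentiate the constrained equation through the representation $g(x)+g(y)+g(z)=G(x^2+y^2+z^2,x+y+z)$; this is closer in spirit to the classical analysis of such Cauchy--Pexider type equations (and to the arguments in \cite{Foschi,Shao}), needs only first-order regularity of $g$ once $G$ is seen to be smooth (your submersion remark handles that, away from $x=y=z$), and identifies the quadratic coefficient structurally as $\partial_q G$, whereas the paper's route avoids introducing $G$ entirely and leans on full analyticity. One step of yours needs tightening: varying $z$ with $x,y$ fixed only shows that $\partial_q G$ is constant along the single curve $z\mapsto(x^2+y^2+z^2,\,x+y+z)$, not on the whole admissible region. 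The quickest repair is already contained in your displayed relations: since $g'(x)-g'(y)=2(x-y)\,\partial_qG$ and $g'(y)-g'(z)=2(y-z)\,\partial_qG$ hold at the same point $(q,s)$, the divided difference $\bigl(g'(x)-g'(y)\bigr)/(x-y)$ equals $\bigl(g'(y)-g'(z)\bigr)/(y-z)$ for all distinct $x,y,z$, and chaining any two pairs through a common auxiliary point shows this divided difference is a single constant $2A$; your conclusion $g'(x)=2Ax+B$ then follows. (Both your argument and the paper's silently pass from $f(x)f(y)f(z)=f(a)f(b)f(c)$ to additivity of $\log f$; this is harmless because $\log f$ has a continuous branch on $\mathbb{R}$ and the constraint set is connected and contains the diagonal, so the possible $2\pi i\mathbb{Z}$ discrepancy vanishes identically.) With these small patches your proof is complete, and the final claims $f=e^{Ax^2+Bx+C}$ with $\Re(A)<0$ from $f\in L^2$ are as in Theorem \ref{thm-quadratic}.
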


\begin{proof}
Consider $\varphi(x)=\log(f(x))$. We know from \cite[Lemma 7.9]{Foschi} that $f$ is nowhere $0$, so $\varphi$ is well defined.  Since $f$ is analytic, then so is $\varphi$. Hence by the power series expansion we have
\begin{equation*}
\varphi(x)=\varphi(0)+\varphi'(0)x+\frac{\varphi''(0)}{2}x^2+\sum_{k\geq 3} \frac{\varphi^{(k)}(0)}{k!}x^k.
\end{equation*}
Hence it is true for $a, b, c, d, e, g$ such that $(a, b, c), (d, e, g)$ satisfy equations \eqref{eq-a1} and \eqref{eq-a2}.  That is,
\begin{equation*}
\begin{split}
\varphi(a)&=\varphi(0)+\varphi'(0)a+\frac{\varphi''(0)}{2}a^2+\sum_{k\geq 3} \frac{\varphi^{(k)}(0)}{k!}a^k, \\
\varphi(b)&=\varphi(0)+\varphi'(0)b+\frac{\varphi''(0)}{2}b^2+\sum_{k\geq 3} \frac{\varphi^{(k)}(0)}{k!}b^k, \\
\varphi(c)&=\varphi(0)+\varphi'(0)c+\frac{\varphi''(0)}{2}c^2+\sum_{k\geq 3} \frac{\varphi^{(k)}(0)}{k!}c^k, \\
\varphi(d)&=\varphi(0)+\varphi'(0)d+\frac{\varphi''(0)}{2}d^2+\sum_{k\geq 3} \frac{\varphi^{(k)}(0)}{k!}d^k, \\
\varphi(e)&=\varphi(0)+\varphi'(0)e+\frac{\varphi''(0)}{2}e^2+\sum_{k\geq 3} \frac{\varphi^{(k)}(0)}{k!}e^k, \\
\varphi(g)&=\varphi(0)+\varphi'(0)g+\frac{\varphi''(0)}{2}g^2+\sum_{k\geq 3} \frac{\varphi^{(k)}(0)}{k!}g^k.
\end{split}
\end{equation*}
By equation \eqref{eq-a3} we know that $\varphi(a)+\varphi(b)+\varphi(c)=\varphi(d)+\varphi(e)+\varphi(g)$.  Thus by using the power series expansions and equations \eqref{eq-a1}, \eqref{eq-a2} and \eqref{eq-a3} we have that
\begin{equation*}
\begin{split}
0&=\varphi(a)+\varphi(b)+\varphi(c)-\varphi(d)-\varphi(e)-\varphi(g) \\
&=\varphi(0)+\varphi'(0)a+\frac{\varphi''(0)}{2}a^2+\sum_{k\geq 3} \frac{\varphi^{(k)}(0)}{k!}a^k \\
&+\varphi(0)+\varphi'(0)b+\frac{\varphi''(0)}{2}b^2+\sum_{k\geq 3} \frac{\varphi^{(k)}(0)}{k!}b^k \\
&+\varphi(0)+\varphi'(0)c+\frac{\varphi''(0)}{2}c^2+\sum_{k\geq 3} \frac{\varphi^{(k)}(0)}{k!}c^k \\
&-\varphi(0)-\varphi'(0)d-\frac{\varphi''(0)}{2}d^2-\sum_{k\geq 3} \frac{\varphi^{(k)}(0)}{k!}d^k \\
&-\varphi(0)-\varphi'(0)e-\frac{\varphi''(0)}{2}e^2-\sum_{k\geq 3} \frac{\varphi^{(k)}(0)}{k!}e^k \\
&-\varphi(0)-\varphi'(0)g-\frac{\varphi''(0)}{2}g^2-\sum_{k\geq 3} \frac{\varphi^{(k)}(0)}{k!}g^k \\
&= \varphi'(0)(a+b+c-d-e-g)+\frac{\varphi''(0)}{2}(a^2+b^2+c^2-d^2-e^2-g^2) \\
&\qquad \qquad +\sum_{k\geq 3}\frac{\varphi^{(k)}(0)}{k!}(a^k+b^k+c^k-d^k-e^k-g^k) \\
&=\sum_{k\geq 3}\frac{\varphi^{(k)}(0)}{k!}(a^k+b^k+c^k-d^k-e^k-g^k).
\end{split}
\end{equation*}
That is, 
\begin{equation}
\begin{split}
\sum_{k \geq 3} \frac{\varphi^{(k)}(0)}{k!}(a^k+b^k+c^k-d^k-e^k-g^k)&=0,
\end{split}
\end{equation}
where $(a, b, c), (d, e, g) \in \mathbb{R}^3$ satisfy equations \eqref{eq-a1} and \eqref{eq-a2}. Consider $a=x,b=-x, c=x,g=0$, by solving the equations 
\begin{equation*}
\begin{split}
  d+e    & =x, \\  
  d^2+e^2 &=3x^2,
\end{split}
\end{equation*}
we obtain that $d=\frac {1+\sqrt 5}{2} x, e= \frac {1-\sqrt 5 }{2} x$. Then $$ a^k+b^k+c^k -e^k-f^k-g^k = 2x^k+(-x)^k- \left(\frac {1+\sqrt 5}{2} x \right)^k-\left(\frac {1-\sqrt 5 }{2} x \right)^k.$$
When $k$ is even,
$$-\left( 3-(\frac {1+\sqrt 5}{2})^k-(\frac {1-\sqrt 5}{2})^k \right) \ge (\frac 32)^k -3>0$$
for $k\ge 3$. 
When $k$ is odd, 
$$-\left( 1-(\frac {1+\sqrt 5}{2})^k+ (\frac {\sqrt 5-1}{2})^k \right) \ge (\frac 32)^k -2>0 $$
for $k\ge 3$. 
This shows that $\varphi^k(0)=0$ when $k \geq 3$. Hence $\varphi^{(k)}(0)=0$ for all $k$.  Thus $\varphi(x)=Ax^2+Bx+C$.  Therefore $f(x)=e^{Ax^2+Bx+C}$.
\end{proof}

\section{Establishing the Exponential Decay in Fourier Space}
Consider the integral
\begin{equation*}
\begin{split}
&Q(f_1, f_2, f_3, f_4, f_5, f_6) = \\
&\int_{\mathbb{R}^6}\overline{\widehat{f_1}}(\xi_1)\overline{\widehat{f_2}}(\xi_2) \overline{\widehat{f_3}}(\xi_3)\widehat{f_4}(\xi_4)\widehat{f_5}(\xi_5) \widehat{f_6}(\xi_6)\delta(\xi_1+\xi_2+\xi_3-\xi_4-\xi_5-\xi_6)\times  \\
&\delta(\xi_1^2+\xi_2^2+\xi_3^2-\xi_4^2-\xi_5^2-\xi_6) d \xi_1d\xi_2d\xi_3d\xi_4d\xi_5d\xi_6.
\end{split}
\end{equation*}

Notice that if $f$ is an extremal function to Strichartz estimate, then $f$ must satisfy the generalized Euler-Lagrange equation

\begin{equation}
\begin{split}
\omega \langle g, f \rangle &=Q(g, f, f, f, f, f)
\end{split}
\end{equation}
for all $g \in L^2$, where $\omega=\frac{1}{||f||^2_2}Q(f, f, f, f, f, f)>0$, and $\delta(\xi)=\frac{1}{2\pi}\int_\mathbb{R} e^{i\xi x} dx$ in the distribution sense.

Define 
\begin{equation*}
\begin{split}
\eta&:=(\eta_1, \eta_2, \eta_3, \eta_4, \eta_5, \eta_6) \in \mathbb{R}^6, \\
a(\eta)&:= \eta_1+\eta_2+\eta_3-\eta_4-\eta_5-\eta_6, \\
b(\eta)&:= \eta_1^2+\eta_2^2+\eta_3^2-\eta_4^2-\eta_5^2-\eta_6^2.
\end{split}
\end{equation*}
The choice of $a(\eta)$ and $b(\eta)$ are useful, since when $a(\eta)=0=b(\eta)$, then $\eta$, more specifically, $(\eta_1, \eta_2, \eta_3), (\eta_4, \eta_5, \eta_6) \in \mathbb{R}^3$, satisfy equations \eqref{eq-a1} and \eqref{eq-a2}.  For $\varepsilon\geq 0, \ \mu \geq 0,$ and $\xi \in \mathbb{R}$ define 
\begin{equation*}
\begin{split}
F(\xi)&:= F_{\mu, \varepsilon}(\xi)= \frac{\mu \xi^2}{1+\varepsilon \xi^2}.
\end{split}
\end{equation*}
For $h_i \in L^2(\mathbb{R}), \ 1 \leq i \leq 6$, define the weighted multilinear integral $M_F$ as 
\begin{equation*}
\begin{split}
M_F(h_1, h_2, h_3, h_4, h_5, h_6)&=\int_{\mathbb{R}^6} e^{F(\eta_1)-\sum_{k=2}^6 F(\eta_k)} \Pi_{k=1}^6 |h(\eta_k)|\delta(a(\eta)) \delta(b(\eta)) d \eta.
\end{split}
\end{equation*}

It is easy to see that 
\begin{equation}\label{eq-3}
\begin{split}
M_F(h_1, h_2, h_3, h_4, h_5, h_6) &\leq \int_{\mathbb{R}^6} \Pi_{k=1}^6 |h(\eta_k)|\delta(a(\eta)) \delta(b(\eta)) d \eta.
\end{split}
\end{equation}

Indeed, on the support of $a$ and $b$, 
$$\eta_1^2 \le \sum_{i=2}^6 \eta_i^2.$$  
We also note that $F(\xi)$ is an increasing function, and $F(\xi)\geq 0 $ for all $\xi, \ \mu$, and $\varepsilon$. So equation \eqref{eq-3} can be derived by
\begin{equation*}
\begin{split}
\left|M_F(h_1, h_2, h_3, h_4, h_5, h_6) \right|&= \left|\int_{\mathbb{R}^6} e^{F(\eta_1)-\sum_{k=2}^6 F(\eta_k)} \Pi_{k=1}^6 |h(\eta_k)|\delta(a(\eta)) \delta(b(\eta)) d \eta \right| \\
&\leq \int_{\mathbb{R}^6}\left|e^{F(\eta_1)-\sum_{k=2}^6 F(\eta_k)} \right| \Pi_{k=1}^6 \left|h(\eta_k) \right| \delta(a(\eta)) \delta(b(\eta)) d \eta \\
&\leq \int_{\mathbb{R}^6} \Pi_{k=1}^6 \left|h(\eta_k) \right| \delta(a(\eta)) \delta(b(\eta)) d \eta.
\end{split}
\end{equation*}
We state the following key lemma, which is established by using the Hausdorff-Young inequality. The two dimensional such estimate is due to Bourgain \cite{Bourgain:1998:refined-Strichartz-NLS} that is much harder. 
\begin{lemma}\label{Pre-Bourgain}

\begin{equation}\label{eq-2}
\begin{split}
\left\| e^{it\Delta} h_1 e^{it \Delta}h_2 \right\|_{L^3_{t, x}}&\leq CN^{-1/6} \|h_1\|_{L^2} \|h_2\|_{L^2},
\end{split}
\end{equation}
where $h_1 \in L^2$ is supported on $|\xi|\leq s$ and $h_2\in L^2$ is supported on $|\eta|\geq Ns$, for $N\gg1$ and $s \gg 1$. 
\end{lemma}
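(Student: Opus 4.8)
The plan is to pass to the space--time Fourier side and cash in the separation between the frequency supports of $h_1$ and $h_2$. Writing $u_j=e^{it\Delta}h_j$, a direct computation from the definitions shows that the space--time Fourier transform $\widetilde{u_1u_2}(\tau,\xi)$ is, up to a constant and harmless sign conventions, the pushforward of the measure $\widehat{h_1}(\xi_1)\widehat{h_2}(\xi_2)\,d\xi_1\,d\xi_2$ under the map $\Phi(\xi_1,\xi_2)=(\xi_1^2+\xi_2^2,\ \xi_1+\xi_2)$; equivalently, it is the integral obtained by resolving the two delta constraints $\delta(\xi-\xi_1-\xi_2)\,\delta(\tau-\xi_1^2-\xi_2^2)$. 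Since $(L^{3/2}_{t,x})'=L^{3}_{t,x}$, the Hausdorff--Young inequality reduces the lemma to the bound
\[
\bigl\|\widetilde{u_1u_2}\bigr\|_{L^{3/2}_{\tau,\xi}} \lesssim N^{-1/6}\,\|h_1\|_{L^2}\|h_2\|_{L^2}.
\]

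To evaluate the left side I would use that $\Phi$ has Jacobian $2(\xi_2-\xi_1)$ and, on the region $\{|\xi_1|\le s\}\times\{|\xi_2|\ge Ns\}$ with $N\ge2$, satisfies $|\xi_1-\xi_2|\ge|\xi_2|-|\xi_1|\ge\tfrac12|\xi_2|\ge\tfrac12 Ns>0$, so it is a diffeomorphism there and the second ($\xi_1\leftrightarrow\xi_2$) preimage drops out, since it would place a frequency $\ge Ns$ inside $\mathrm{supp}\,\widehat{h_1}\subset\{|\xi|\le s\}$. Changing variables $(\tau,\xi)=\Phi(\xi_1,\xi_2)$, with $d\tau\,d\xi=2|\xi_1-\xi_2|\,d\xi_1\,d\xi_2$, converts $\|\widetilde{u_1u_2}\|_{L^{3/2}}^{3/2}$ into a constant times
\[
\int_{|\xi_1|\le s}\int_{|\xi_2|\ge Ns}\frac{|\widehat{h_1}(\xi_1)|^{3/2}\,|\widehat{h_2}(\xi_2)|^{3/2}}{|\xi_1-\xi_2|^{1/2}}\,d\xi_1\,d\xi_2.
\]
Bounding $|\xi_1-\xi_2|^{-1/2}\lesssim|\xi_2|^{-1/2}$, the integral factorizes; applying H\"older with exponents $(4/3,4)$ in each variable gives $\|\widehat{h_1}\|_{L^2}^{3/2}(2s)^{1/4}$ from the $\xi_1$--integral and $\|\widehat{h_2}\|_{L^2}^{3/2}(2/Ns)^{1/4}$ from the $\xi_2$--integral. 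The powers of $s$ cancel, leaving $N^{-1/4}$; taking the $2/3$ power and using Plancherel produces exactly the claimed $N^{-1/6}$.

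The only delicate point --- and the main obstacle --- is making the Fourier-side manipulations rigorous, since $\widetilde{u_1u_2}$ is really a measure carried by the surface $\Phi(\mathbb{R}^2)$: one should first run the argument for Schwartz $h_1,h_2$ (where $u_1u_2$ and $\widetilde{u_1u_2}$ are honest rapidly decaying functions and $\widetilde{u_1u_2}$ is given by the explicit delta-calculus formula), and then pass to general $L^2$ functions with the stated support constraints by density. A secondary but essential bookkeeping observation is that one must retain the genuine decay $|\xi_1-\xi_2|\sim|\xi_2|$ rather than merely $|\xi_1-\xi_2|\gtrsim Ns$: because $\widehat{h_2}$ lives on the unbounded set $\{|\xi_2|\ge Ns\}$, it is the integrable weight $|\xi_2|^{-2}$ arising from $|\xi_2|^{-1/2}$ that makes the H\"older estimate converge and supplies the decay in $N$.
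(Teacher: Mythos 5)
Your argument is correct and takes essentially the same route as the paper: write the product on the space--time Fourier side via the change of variables $(\xi_1,\xi_2)\mapsto(\xi_1+\xi_2,\ \xi_1^2+\xi_2^2)$, note the frequency separation makes the relevant branch unique and the Jacobian nondegenerate, apply Hausdorff--Young from $L^{3/2}_{\tau,\xi}$ to $L^{3}_{t,x}$, and finish with H\"older ($4$ and $4/3$) plus Plancherel. The only difference is the endgame: you keep the Jacobian weight and use $|\xi_1-\xi_2|^{-1/2}\lesssim|\xi_2|^{-1/2}$ in a weighted H\"older estimate on $\{|\xi_2|\ge Ns\}$, while the paper instead splits that region into dyadic shells $2^kNs\le|\xi_2|\le 2^{k+1}Ns$, bounds the Jacobian by $(2^kNs)^{-1}$ on each shell, and sums a geometric series --- both give the same $N^{-1/6}$ gain, and your bookkeeping (including the density argument for general $L^2$ data) is sound.
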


Equation \eqref{eq-2} has been established in \cite{Silva}. We provide a proof for completeness. Let $\widehat{f}$ be supported on $|\xi|\leq s$ and $\widehat{g}$ be supported on $|\eta|\geq Ns$ where $N \gg 1$ and $s\gg1$, and note that 
\begin{equation*}
\begin{split}
e^{it\Delta}fe^{it\Delta}g&=\frac{1}{(2\pi)^2} \int_\mathbb{R} \int_\mathbb{R} e^{ix\xi+it\xi^2}\widehat{f}(\xi)e^{ix\eta+it\eta^2}\widehat{g}(\eta)d\eta d \xi \\
&=\frac{1}{(2\pi)^2}\int_\mathbb{R} \int_\mathbb{R}e^{ix(\xi+\eta)+it(\xi^2+\eta^2)}\widehat{f}(\xi)\widehat{g}(\eta)d\eta d \xi.
\end{split}
\end{equation*}
Consider the change of variables $\gamma=\xi+\eta$ and $\tau=\xi^2+\eta^2$.  Let $|J|=\frac{1}{\sqrt{2\tau-\gamma^2}}$ be the corresponding Jacobian.  Thus $2\tau-\gamma^2=(\xi-\eta)^2$.  Let $|\xi|\leq s$ and $|\eta|\geq Ns$, where $N>1$.  If $\eta>0$, then $\eta>\xi$.  If $\eta<0$, then $\eta<\xi$.  In either case, the Jacobian is well defined.  Likewise, by considering $2^kNs \leq |\eta|\leq 2^{k+1}Ns$, we see that $|J|\lesssim (2^kNs)^{-1}$. Let 

\begin{equation*}
\begin{split}
G(\gamma, \tau)&:=\widehat{f} \left( \frac{\gamma+ \sqrt{2\tau-\gamma^2}}{2} \right) \widehat{g} \left(\frac{\gamma- \sqrt{2\tau-\gamma^2}}{2} \right)|J|.
\end{split}
\end{equation*}
Then we have 
\begin{equation*}
\begin{split}
e^{it\Delta}fe^{it\Delta}g&=\frac{1}{(2\pi)^2}\int e^{ix\gamma+it\tau}G(\gamma, \tau) d(\gamma \times \tau) \\
&=\frac{1}{(2\pi)^2}\widetilde{G}(x, t).
\end{split}
\end{equation*}
Thus by the Hausdorff-Young inequality and change of variables, we have 
\begin{align*}
\left\|e^{it\Delta}f e^{it\Delta}g \right\|_{L^3_{x, t}}&=\left\|\tilde{G}(x, t) \right\|_{L^3_{x, t}} \leq \left\|G(\gamma, \tau) \right\|_{L^{3/2}_{\gamma, \tau}} =\left( \int |G(\gamma, \tau)|^{3/2} d (\gamma \times \tau) \right)^{\frac{2}{3}} \\
&= \left( \int \left|\widehat{f}\left( \frac{\gamma \pm \sqrt{2\tau-\gamma^2}}{2} \right) \right|^{\frac{3}{2}} \left| \widehat{g} \left( \frac{\gamma \mp \sqrt{2\tau - \gamma^2}}{2} \right) \right|^{\frac{3}{2}} \left|J\right|^{\frac{3}{2}} d(\gamma \times \tau) \right)^{\frac{2}{3}} \\
&= \left( \int \left| \widehat{f}( \xi) \right|^{\frac{3}{2}} \left| \widehat{g}(\eta) \right|^{\frac{3}{2}}|J|^{\frac{3}{2}} |J|^{-1} d(\xi \times \eta) \right)^{\frac{2}{3}}.
\end{align*}
The above continues to equal
\begin{align*}
&\left( \sum_{k=0}^\infty \int_{|\xi|\leq s, 2^{k}Ns \leq |\eta| \leq 2^{k+1}Ns} \left| \widehat{f}( \xi) \right|^{\frac{3}{2}} \left| \widehat{g}(\eta) \right|^{\frac{3}{2}} |J|^{\frac{1}{2}} d(\xi \times \eta) \right)^{\frac{2}{3}} \\
& \leq \sum_{k=0}^\infty \left(  \int_{|\xi|\leq s, 2^{k}Ns \leq |\eta| \leq 2^{k+1}Ns} \left| \widehat{f}( \xi) \right|^{\frac{3}{2}} \left| \widehat{g}(\eta) \right|^{\frac{3}{2}} |J|^{\frac{1}{2}} d(\xi \times \eta) \right)^{\frac{2}{3}}\\
& \lesssim\sum_{k=0}^\infty  \left( \int \left| \widehat{f}( \xi) \right|^{\frac{3}{2}} \left| \widehat{g}(\eta) \right|^{\frac{3}{2}} (2^ksN)^{-\frac{1}{2}} d(\xi \times \eta) \right)^{\frac{2}{3}} \\
&=(sN)^{-\frac{1}{3}}\sum_{k=0}^\infty 2^{-\frac{k}{3}}\left(\int_{|\xi | \leq s, 2^kNs \leq |\eta | \leq 2^{k+1}Ns} \left| \widehat{f}( \xi) \right|^{\frac{3}{2}} \left| \widehat{g}(\eta) \right|^{\frac{3}{2}}  d(\xi \times \eta) \right)^{\frac{2}{3}} \\
&=(sN)^{-\frac{1}{3}}\left( \int_{|\xi|\leq s} |\widehat{f}(\xi)|^{\frac{3}{2}} d \xi \right)^{\frac{2}{3}}\sum_{k=0}^\infty2^{-\frac{k}{3}}  \left( \int_{ 2^kNs \leq |\eta|\leq 2^{k+1} Ns}  |\widehat{g}(\eta) |^{\frac{3}{2}} d\eta \right)^{\frac{2}{3}}.
\end{align*}
For $\left(\int_{|\xi|\leq s} |\widehat{f}(\xi)|^{3/2}\right)^{2/3}$, we wish to show that $\int_{|\xi|\leq s} \left| \widehat{f}(\xi) \right|^{3/2} \lesssim s^{1/4} \|f\|_2^{3/2}$.  Consider Hölder's inequality, for $p=4$ and $q=\frac{4}{3}$.  Then 
\begin{equation*}
\begin{split}
\int_{|\xi|\leq s} \left| \widehat{f} (\xi) \right|^{3/2} &= \int_{|\xi| \leq s} 1 \cdot \left| \widehat{f}(\xi) \right|^{3/2} \\
& \leq \left( \int_{|\xi|\leq s} 1^4 \right)^{1/4} \left( \int_{|\xi|\leq s} \left|\ \widehat{f}(\xi) \right|^2 \right)^{3/4} \\
&=(2s)^{1/4} \left( \left(\int_{|\xi|\leq s} \left| \widehat{f}(\xi) \right|^2 \right)^{1/2} \right)^{3/2} \\
&\leq (2s)^{1/4}\|\widehat{f} \|_2^{3/2} = (2s)^{1/4} \|f\|_2^{3/2},
\end{split}
\end{equation*}
where the final step is a consequence of Plancherel's theorem.  Hence $$\left( \int_{|\xi| \leq s} \left| \widehat{f}(\xi) \right|^{3/2} \right)^{2/3} \leq (2s)^{1/6} \|f\|_2.$$

As for $\left(\int_{2^{k}Ns \leq |\eta|\leq 2^{k+1}Ns} |\widehat{g}(\eta)|^{3/2}\right)^{2/3}$, we use a similar technique as we did for $\widehat{f}$.  Specifically,
\begin{equation*}
\begin{split}
\int_{2^kNs \leq |\eta| \leq 2^{k+1}Ns} \left| \widehat{g}(\eta) \right|^{3/2} & \leq \left( \int_{2kNs \leq |\eta| \leq 2^{k+1}Ns} 1^4 \right)^{1/4} \left( \int_{2^kNs \leq |\eta| \leq 2^{k+1}Ns} \left| \widehat{g}(\eta) \right|^2 \right)^{3/4} \\
&=\left(2^kNs\right)^{1/4} \left( \left( \int_{2^kNs \leq |\eta| \leq 2^{k+1}Ns} \left| \widehat{g}(\eta) \right|^2 \right)^{1/2} \right)^{3/2} \\
&\leq \left(2^kNs \right)^{1/4} \|\widehat{g} \|_2^{3/2} \\
&=\left(2^kNs \right)^{1/4} \|g\|_2^{3/2}.
\end{split}
\end{equation*}
Hence $\left(\int_{2^kNs \leq |\eta| \leq 2^{k+1} Ns} \left| \widehat{g}(\eta) \right|^{3/2}\right)^{2/3} \leq (2^kNs)^{1/6} \|g\|_2$.  By pairing this with the above we have

\begin{equation*}
\begin{split}
\left\|e^{it\Delta}f e^{it\Delta}g \right\|_{L^3_{x, t}}& \lesssim(sN)^{-\frac{1}{3}}\left( \int_{|\xi|\leq s} |\widehat{f}(\xi)|^{\frac{3}{2}} d \xi \right)^{\frac{2}{3}} \\
&\qquad \times \sum_{k=0}^\infty2^{-\frac{k}{3}}  \left( \int_{ 2^kNs \leq |\eta|\leq 2^{k+1} Ns}  |\widehat{g}(\eta) |^{\frac{3}{2}} d\eta \right)^{\frac{2}{3}} \\
& \leq (sN)^{-1/3} (2s)^{1/6} \|f\|_2 \sum_{k=0}^\infty 2^{-k/3} (2^kNs)^{1/6} \|g\|_2 \\
&=2^{1/6}N^{-1/6} \|f\|_2 \|g\|_2 \sum_{k=0}^\infty 2^{-k/6} \\
&= CN^{-1/6} \|f\|_ 2 \|g\|_2.
\end{split}
\end{equation*}

If we pair the estimate in Lemma \ref{Pre-Bourgain} with Hölder's inequality and the $L^6 \rightarrow L^2$ Strichartz inequality, we get the following lemma.
\begin{lemma}\label{le-multilinear}
Let $h_k \in L^2(\mathbb{R}), \ 1 \leq k \leq 6$, and $s\gg 1, \ N \gg 1$.  Suppose that the Fourier transform of $h_1$ is supported on $\{\xi: |\xi| \leq s\}$ and the Fourier transform of $h_2$ is supported on $\{|\xi|\geq Ns\}$.  Then 
\begin{equation*}
\begin{split}
M_F(h_1,h_2, h_3, h_4, h_5, h_6)&\leq CN^{-1/6} \Pi_{k=1}^6 \|h_k\|_2.
\end{split}
\end{equation*}
\end{lemma}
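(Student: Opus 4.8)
The plan is to peel off the exponential weight with \eqref{eq-3}, recognize the resulting unweighted multilinear integral as the space--time integral of a product of six Schrödinger evolutions, and then distribute those six factors across a Hölder inequality so that the bilinear gain of Lemma \ref{Pre-Bourgain} is used on the two frequency-separated factors while the remaining four are absorbed by the linear Strichartz inequality \eqref{eq-1}. This is precisely the scheme announced in the sentence preceding the lemma.

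First I would invoke \eqref{eq-3} to write
\[
M_F(h_1,\dots,h_6)\ \le\ \int_{\mathbb{R}^6}\prod_{k=1}^6\left|\widehat{h_k}(\eta_k)\right|\,\delta(a(\eta))\,\delta(b(\eta))\,d\eta,
\]
so the weight plays no further role. For each $k$ let $g_k\in L^2(\mathbb{R})$ be determined by $\widehat{g_k}=|\widehat{h_k}|$; then $\|g_k\|_2=\|h_k\|_2$ by Plancherel, while the frequency supports are unchanged, so $\widehat{g_1}$ is supported in $\{|\xi|\le s\}$ and $\widehat{g_2}$ in $\{|\xi|\ge Ns\}$. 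Writing $\delta(a(\eta))\,\delta(b(\eta))=(2\pi)^{-2}\int_{\mathbb{R}^2}e^{ixa(\eta)+itb(\eta)}\,dx\,dt$ and carrying out the $\eta$-integration factor by factor — exactly the computation performed just before Lemma \ref{Pre-Bourgain} — identifies the right-hand side above with a fixed constant multiple of $\int_{\mathbb{R}^2}\prod_{k=1}^{3}e^{it\Delta}g_k\,\prod_{k=4}^{6}\overline{e^{it\Delta}g_k}\,dx\,dt$; in particular, since $\widehat{g_k}\ge 0$,
\[
\int_{\mathbb{R}^6}\prod_{k=1}^6\left|\widehat{h_k}(\eta_k)\right|\,\delta(a(\eta))\,\delta(b(\eta))\,d\eta\ \le\ c\int_{\mathbb{R}^2}\prod_{k=1}^6\left|e^{it\Delta}g_k\right|\,dx\,dt
\]
for some $c>0$.

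Next I would apply Hölder's inequality in $(t,x)$ to the product $\prod_{k=1}^6|e^{it\Delta}g_k|$, grouping $|e^{it\Delta}g_1|\,|e^{it\Delta}g_2|=|e^{it\Delta}g_1\,e^{it\Delta}g_2|$ as one factor measured in $L^3_{t,x}$ and keeping $|e^{it\Delta}g_3|,\dots,|e^{it\Delta}g_6|$ as four factors measured in $L^6_{t,x}$; the exponents are admissible because $\tfrac13+4\cdot\tfrac16=1$. This bounds the space--time integral by $\bigl\|e^{it\Delta}g_1\,e^{it\Delta}g_2\bigr\|_{L^3_{t,x}}\prod_{k=3}^6\bigl\|e^{it\Delta}g_k\bigr\|_{L^6_{t,x}}$. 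Lemma \ref{Pre-Bourgain} applies to the first factor since $\widehat{g_1},\widehat{g_2}$ obey the required frequency localization and $N,s\gg1$, giving the gain $CN^{-1/6}\|g_1\|_2\|g_2\|_2$, while \eqref{eq-1} bounds each of the four remaining factors by $C_1\|g_k\|_2$. Multiplying these estimates and using $\|g_k\|_2=\|h_k\|_2$ yields $M_F(h_1,\dots,h_6)\le CN^{-1/6}\prod_{k=1}^6\|h_k\|_2$, as claimed.

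The only step that needs care is the identity turning the $\delta$-integral into a space--time integral: one must track the constant and the placement of the complex conjugates correctly and justify that all integrals are absolutely convergent. This is routine — one proves the identity first for Schwartz $h_k$, where the Fourier manipulations are unambiguous, and then passes to general $L^2$ data, noting that the right-hand side is finite for all $L^2$ inputs precisely because of the Hölder/Strichartz/bilinear bounds just invoked. Everything else is bookkeeping.
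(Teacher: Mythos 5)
Your argument is correct and is exactly the route the paper intends: the paper gives no detailed proof of this lemma beyond the sentence that it follows by pairing Lemma \ref{Pre-Bourgain} with H\"older's inequality and the linear Strichartz bound \eqref{eq-1}, and your write-up (drop the weight via \eqref{eq-3}, convert the delta integral to a space--time integral of six Schr\"odinger evolutions with $\widehat{g_k}=|\widehat{h_k}|$, then H\"older with exponents $3,6,6,6,6$, the bilinear gain on the $g_1g_2$ pair, and $L^6$ Strichartz on the rest) is precisely that argument carried out in detail. No gaps; your care about the constant, the conjugates, and density of Schwartz functions is the right level of rigor.
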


Next we focus on establishing Theorem \ref{thm-complex-analyticity}. If it can be shown that $e^{\mu \xi^2}\widehat{f} \in L^2$ for some $\mu>0$, then $e^{\lambda |\xi|^2} \widehat{f} \in L^1$ for some $0<\lambda<\mu$. Then by the Fourier inversion equation we have that $f(z)=\frac{1}{2\pi} \int_\mathbb{R} e^{iz\xi-\lambda |\xi|^2}e^{\lambda |\xi|^2} \widehat{f}(\xi)d\xi$.  Thus 

\begin{equation*}
\begin{split}
\partial_{\overline{z}}f(z)&= \partial_{\overline{z}} \left( \frac{1}{2\pi} \int_\mathbb{R} e^{iz\xi-\lambda |\xi|^2}e^{\lambda |\xi|^2}\widehat{f}(\xi) d\xi \right)\\
&=\int_\mathbb{R} \partial_{\overline{z}}\left( e^{iz\xi-\lambda|\xi|^2}\right)e^{\lambda|\xi|^2} \widehat{f}(\xi) d\xi=0.
\end{split}
\end{equation*}
So $f$ can be extended to complex analytic on $\mathbb{C}$. To prove Theorem \ref{thm-complex-analyticity}, we establish

\begin{lemma}\label{le-bootstrap} Let $f$ solve the generalized Euler-Lagrange equation $(7)$ for $\omega$ as defined just below equation $(7)$, $\|f\|_2=1$, and define $\widehat{f}_>:= \widehat{f} 1_{|\xi|\geq s^2}$ for $s>0$.  Then there exists some $s \gg 1$ such that for $\mu=s^{-4}$, 
\begin{equation}
\begin{split}
\omega \left\|e^{F(\cdot)}\widehat{f}_> \right\|_2& \leq o_1(1) \left\|e^{F(\cdot)}\widehat{f}_> \right\|_2+C\left\|e^{F(\cdot)}\widehat{f}_> \right\|_2^2+C\left\|e^{F(\cdot)}\widehat{f}_> \right\|^3_2\\
&\qquad +C\left\|e^{F(\cdot)}\widehat{f}_> \right\|_2^4+C\left\|e^{F(\cdot)}\widehat{f}_> \right\|_2^5+o_2(1),
\end{split}
\end{equation}
where $\lim_{s \rightarrow \infty} o_i(1)=0$ uniformly for all $\varepsilon>0, \ i=1, 2$, and the constant $C$ is independent of $\varepsilon$ and $s$.
\end{lemma}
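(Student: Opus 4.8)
The plan is to test the generalized Euler--Lagrange equation \eqref{eq-strichartzequ} against a carefully chosen function and then estimate the resulting sextilinear integral by splitting the frequency variables. Write $x:=\|e^{F}\widehat{f}_>\|_2$; the claim is trivial when $x=0$, so assume $x>0$. Define $g\in L^2$ by $\widehat{g}=e^{2F}\widehat{f}_>$. This $g$ really does belong to $L^2$, because $F=F_{\mu,\varepsilon}$ is \emph{bounded} (by $\mu/\varepsilon$) for each fixed $\varepsilon>0$ --- which is exactly what the regularization parameter $\varepsilon$ is for. Substituting this $g$ into \eqref{eq-strichartzequ} and using Plancherel, the left-hand side equals (up to the harmless Plancherel constant) $\omega\,x^2$, since $e^{2F}\widehat{f}_>\,\overline{\widehat{f}}=e^{2F}|\widehat{f}|^2\,1_{|\xi|\ge s^2}$, while the right-hand side is $\mathcal{Q}(g,f,f,f,f,f)$, which the triangle inequality bounds by
\begin{equation*}
I:=\int_{\mathbb{R}^6}e^{2F(\xi_1)}\,|\widehat{f}_>(\xi_1)|\,\prod_{k=2}^6|\widehat{f}(\xi_k)|\;\delta(a(\xi))\,\delta(b(\xi))\,d\xi .
\end{equation*}
Thus it suffices to establish $I\le o_s(1)\bigl(x+x^2+x^3+x^4\bigr)$ with $o_s(1)\to0$ uniformly in $\varepsilon$; dividing by $x$ then gives the lemma, the $\|e^{F}\widehat{f}_>\|_2^5$ and $o_2(1)$ terms on the right being available but not needed.

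The heart of the matter is a pointwise reduction of the weight. On the support of $\delta(b(\xi))$ we have $\xi_1^2\le\xi_4^2+\xi_5^2+\xi_6^2$, and the map $t\mapsto\mu t/(1+\varepsilon t)$ is nonnegative, increasing, and concave on $[0,\infty)$ with value $0$ at $0$, hence subadditive, so there $F(\xi_1)\le F(\xi_4)+F(\xi_5)+F(\xi_6)$. Therefore
\begin{equation*}
e^{2F(\xi_1)}=e^{F(\xi_1)}e^{F(\xi_1)}\le e^{F(\xi_1)}\,e^{F(\xi_4)+F(\xi_5)+F(\xi_6)},
\end{equation*}
and distributing these exponential factors in $I$ yields
\begin{equation*}
I\le\int_{\mathbb{R}^6}\bigl(e^{F}|\widehat{f}_>|\bigr)(\xi_1)\,|\widehat{f}(\xi_2)|\,|\widehat{f}(\xi_3)|\,\prod_{k=4}^6\bigl(e^{F}|\widehat{f}|\bigr)(\xi_k)\;\delta(a(\xi))\,\delta(b(\xi))\,d\xi .
\end{equation*}
Two features are decisive: slots $2$ and $3$ now carry \emph{no} weight; and on $\{|\xi|<s^2\}$ the weight is harmless, because the choice $\mu=s^{-4}$ forces $F(\xi)\le\mu\xi^2<1$ there, whence $\|e^{F}\widehat{f}\,1_{|\xi|<s^2}\|_2\le e\|f\|_2$ and hence $\|e^{F}\widehat{f}\|_2\le e\|f\|_2+x$.

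Now split the second slot, $\widehat{f}=\widehat{f}\,1_{|\xi|\le s}+\widehat{f}\,1_{|\xi|>s}$, so that $I\le I_{\mathrm{tail}}+I_{\mathrm{sep}}$. In $I_{\mathrm{tail}}$ the second slot is $\widehat{f}\,1_{|\xi|>s}$, whose $L^2$-norm $\rho_s:=\|\widehat{f}\,1_{|\xi|>s}\|_2$ tends to $0$ as $s\to\infty$ and is independent of $\varepsilon$; bounding the six factors by the plain multilinear Strichartz estimate $\int\prod_k|h_k|\,\delta(a)\delta(b)\le C\prod_k\|h_k\|_2$ (six applications of \eqref{eq-1} together with H\"older's inequality) gives $I_{\mathrm{tail}}\le C\rho_s\,x\,(e\|f\|_2+x)^3$. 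In $I_{\mathrm{sep}}$ the first slot has Fourier support in $\{|\xi|\ge s^2\}$ and the second in $\{|\xi|\le s\}$, hence they are frequency-separated with ratio $N=s$; applying Lemma~\ref{Pre-Bourgain} to this pair and H\"older with \eqref{eq-1} to the two remaining pairs gives $I_{\mathrm{sep}}\le Cs^{-1/6}\,x\,(e\|f\|_2+x)^3$. Since $(e\|f\|_2+x)^3\le C(1+x^3)$, adding these yields
\begin{equation*}
I\le C\bigl(\rho_s+s^{-1/6}\bigr)\bigl(x+x^4\bigr)=o_s(1)\bigl(x+x^2+x^3+x^4\bigr),
\end{equation*}
with $o_s(1)\to0$ uniformly in $\varepsilon$ and $C$ independent of $\varepsilon$ and $s$.

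Combining this with the substituted Euler--Lagrange identity and dividing by $x$ gives the asserted inequality. The step I expect to be the main obstacle is the pointwise weight reduction above: it is essential to unload the surplus $e^{F(\xi_1)}$ onto slots $4,5,6$ and keep slots $2,3$ weight-free, since the smallness of the tail piece $\widehat{f}\,1_{|\xi|>s}$ is destroyed the moment it is multiplied by $e^{F}$, which blows up precisely on $\{|\xi|\ge s^2\}$ --- the range over which $\rho_s$ is controlled. One also has to confirm that $g\in L^2$ (forcing $\varepsilon>0$ here) and that every $o_s(1)$ and every constant is uniform in $\varepsilon$, but these are immediate once the decomposition is in place; the multilinear Strichartz bound and the H\"older bookkeeping over the six factors are then routine given Lemmas~\ref{Pre-Bourgain} and~\ref{le-multilinear}.
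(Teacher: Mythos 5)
Your proof is correct, and its skeleton is the paper's: test \eqref{eq-strichartzequ} with $\widehat{g}=e^{2F}\widehat{f}_>$ (legitimate since $F\le\mu/\varepsilon$), exploit the constraint surface to tame the surplus weight, take $\mu=s^{-4}$ so that $e^{\mu s^4}$ is an absolute constant, and get smallness either from the bilinear gain of Lemma \ref{Pre-Bourgain} (via Lemma \ref{le-multilinear}) when slot $1$ (supported in $|\xi|\ge s^2$) meets a low-frequency piece, or from the $L^2$ tail $\|\widehat f\,1_{|\xi|>s}\|_2\to0$. Where you differ is the bookkeeping of the weight and the decomposition: the paper bounds the full weight $e^{F(\xi_1)-\sum_{k\ge2}F(\xi_k)}$ by $1$, works with $h=e^{F}\widehat f$ in all five remaining slots, and splits each of them into $h_<$ and $h_>$, which generates the whole cascade $B_1,\dots,B_5$ and hence powers up to $\|h_>\|_2^6$ before division; you instead use the sharper pointwise bound $F(\xi_1)\le F(\xi_4)+F(\xi_5)+F(\xi_6)$ (from $\xi_1^2\le\xi_4^2+\xi_5^2+\xi_6^2$ and subadditivity of $t\mapsto\mu t/(1+\varepsilon t)$) to keep slots $2,3$ weight-free, split only slot $2$, and crudely bound slots $4$--$6$ by $\|h\|_2\le e+x$. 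This buys a shorter case analysis and a strictly stronger conclusion ($\omega x\le o_s(1)(1+x+x^2+x^3)$, uniformly in $\varepsilon$), which of course implies the stated inequality; the paper's version is more symmetric but needs the extra $B_k$ terms. One small slip in your framing: after dividing by $x$ your bound does contain a constant term $o_s(1)$, so the $o_2(1)$ term of the lemma is in fact used (it is the $x^4$ and $x^5$ terms that you never need), but this is purely cosmetic and does not affect the argument.
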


\begin{proof}
For this proof we follow the proof of lemma $2.2$ in \cite{Shao}.  Note that $\left\|e^{F(\cdot)} \widehat{f}_> \right\|_2^2=\langle e^{F(\cdot)}\widehat{f}_>, e^{F(\cdot)}\widehat{f}_> \rangle=\langle e^{2F(\cdot)}\widehat{f}_>, \widehat{f} \rangle=\langle e^{2F(\cdot)}f_>, f \rangle$.  So by equation $(7)$ we have that 

\begin{equation*}
\begin{split}
\omega &\left\|e^{F(\cdot)}\widehat{f}_> \right\|_2^2=Q(e^{2F(\cdot )} f_>, f, f, f, f, f) \\
&\quad =\int_\mathbb{R^6} e^{F(\xi_1)-\sum_{k=2}^6 F(\xi_k)}h_>(\xi_1)h(\xi_2)h(\xi_3)h(\xi_4)h(\xi_5)h(\xi_6) \delta(a(\xi))\delta(b(\xi))d\xi,
\end{split}
\end{equation*}
where $\xi=(\xi_1, \xi_2, \xi_3, \xi_4, \xi_5, \xi_6)$, and $h(\xi_i):=e^{F(\xi_i)}\widehat{f}(\xi_i)$ and $h_>(\xi_i):=e^{F(\xi_i)}\widehat{f}_>(\xi_i)$ for $1 \leq i \leq 6$.  Thus $\omega\left\|e^{F(\cdot)}\widehat{f}_> \right\|_2^2\leq M_F(h_>, h, h, h, h, h)$.  Define $h_\ll:=h 1_{|\xi|<s}$ and $h_\sim:=h1_{s\leq |\xi| \leq s^2}$.  We can break $M_F$ up into the intervals $|\xi|< s, \ |\xi|\geq s^2,$ and $s\leq |\xi| < s^2$ so that $h_<= h1_{|\xi|<s^2}$.  Thus
\begin{equation*}
\begin{split}
M_F(h_>, h, h, h, h, h)&= M_F(h_>, h_<, ..., h_<) + \sum_{j_2, ..., j_6} M_F(h_>, h_{j_2}, ..., h_{j_6} ) \\
&:= A+B,
\end{split}
\end{equation*}
where $j_i$ is either $<$ or $>$, and at least one of the subscripts is $>$.  We break up $A$ into $M_F(h_>, h_\ll, h_<, ..., h_<)+M_F(h_>, h_\sim, h_<, ..., h_<):=A_1+A_2$.  By lemma $3.1$ we know that $A_1 \lesssim s^{-1/6} \|h_>\|_2 \|h_\ll \|_2 \|h_<\|_2^4$. Note that 
\begin{equation*}
\begin{split}
\|h_< \|^2_2&= \int \left| e^{F(\xi)}\widehat{f}(\xi) 1_{|\xi| < s^2}\right|^2 = \int e^{2 \frac{\mu\xi^2}{1+\varepsilon \xi^2}} \left| \widehat{f} \right|^2 1_{|\xi|<s^2} \\
&\leq e^{2\mu s^4} \|f\|_2^2 =e^{2\mu s^4}.
\end{split}
\end{equation*}
So  $\|h_<\|_2 \leq e^{\mu s^4}$.  Likewise, $\|h_\ll\|_2 \leq e^{\mu s^2}$.  As for $A_2$, if we similarly define $f_\sim$, we have $\|h_\sim\|_2\leq e^{\mu s^4} \|f_\sim\|_2$.  To see that $\|f_\sim\|_2 \rightarrow 0$ as $s \rightarrow \infty$, recall that 
\begin{equation*}
\begin{split}
\|f\|_2^2&=\sum_{k=1}^\infty \int_{x_k \leq | \xi | \leq x_{k+1}} |f|^2,
\end{split}
\end{equation*}
where $\{x_k\}$ is a sequence such that $x_0=0$, and $x_k$ is strictly increasing.  Since $\|f\|_2=1$, then $\lim_{k \rightarrow \infty} \int_{x_k \leq |\xi| \leq x_{k+1}}|f|^2 =0$. Thus $\|f_\sim \|_2 \rightarrow 0$ as $s \rightarrow \infty$.  So
\begin{equation*}
\begin{split}
A&=A_1+A_2 \\
&\lesssim s^{-1/6} \|h_>\|_2 \|h_\ll \|_2 \|h_<\|_2^4+\|h_>\|_2\|h_\sim \|_2 \|h_<\|_2^4 \\
&\leq s^{-1/6} \|h_>\|_2 e^{\mu s^2}e^{\mu s^4}+ \|h_>\|_2 e^{\mu s^4} \|f_\sim \|_2 e^{\mu s^4} \\
&=e^{2\mu s^4} \|h_>\|_2 \left( s^{-1/6}e^{\mu s^2-\mu s^4}+ \|f_\sim \|_2 \right) \\
&=o_1(1) \|h_>\|_2 =o_1(1) \left\|e^{F(\cdot)}\widehat{f} \right\|_2,
\end{split}
\end{equation*}
where $o_1(1) = e^{2\mu s^4} \left(s^{-1/6}e^{\mu s^2-\mu s^4}+ \|f_\sim \|_2\right)$.  Thus $o_1(1) \rightarrow 0$ as $s \rightarrow \infty$.  

As for $B$, let $B_1:=\sum_{j_2, ..., j_6} M_F(h_>, h_{j_2}, ..., h_{j_6} )$ containing precisely $1$ $h_>\in \{h_{j_2}, ..., h_{j_6}\}$, $B_k:= \sum_{j_2, ..., j_6} M_F(h_>, h_{j_2}, ..., h_{j_6})$ containing precisely $k \ h_> \in \{h_{j_2}, ..., h_{j_6}\}$. For example, 
\begin{equation*}
\begin{split}
B_1&=M_F(h_>, h_>, h_<, h_<, h_<, h_<)+M_F(h_>, h_<, h_>, h_<, h_<, h_<) \\
&+M_F(h_>, h_<, h_<, h_>, h_<, h_<)+M_F(h_>, h_<, h_<, h_<, h_>, h_<) \\
&+M_F(h_>, h_<, h_<, h_<, h_<, h_>) \\
&=CM_F(h_>, h_<, h_>, h_<, h_<, h_<) \\
&=CM_F(h_>, h_\ll, h_>, h_<, h_<, h_<)+CM_F(h_>, h_\sim, h_>, h_<, h_<, h_<).
\end{split}
\end{equation*}

By the same argument as for $A$, we see that 
\begin{equation*}
\begin{split}
B_1&\lesssim s^{-1/6} \|h_>\|_2^2 \|h_\ll \|_2 \|h_<\|_2^3+\|h_>\|_2^2 \|h_\sim \|_2 \|h_<\|_2^3 \\
&=o_2(1) \|h_>\|_2^2.
\end{split}
\end{equation*}
Hence $B_1\lesssim o_2(1) \left\|e^{F(\cdot )} \widehat{f}_> \right \|_2^2$ where $o_2(1) \rightarrow 0$ as $s \rightarrow \infty$.  Following a similar process, we find that $B_k \lesssim \left\|e^{F(\cdot)}\widehat{f}_> \right\|_2^{k+1}$.  By setting $\mu=s^{-4}$ we have $e^{4\mu s^4}=e^4$.  Thus we have 
\begin{equation}
\begin{split}
\omega \left\|e^{F(\cdot)}\widehat{f}_>\right\|_2^2&\leq o_1(1) \left\|e^{F(\cdot)} \widehat{f}_>\right\|_2+o_2(1)\left\|e^{F(\cdot)} \widehat{f}_>\right\|_2^2+C\left\|e^{F(\cdot)} \widehat{f}_>\right\|_2^3+C\left\|e^{F(\cdot)} 
\widehat{f}_>\right\|_2^4 \\
&\qquad +\left\|e^{F(\cdot)} \widehat{f}_>\right\|_2^5+\left\|e^{F(\cdot)} \widehat{f}_>\right\|_2^6.
\end{split}
\end{equation}
Dividing both sides of inequality $(17)$ by $\left\|e^{F(\cdot)}\widehat{f}_>\right\|_2$ we obtain the desired result. 
\end{proof}

To see that $e^{\mu \xi^2}\widehat{f} \in L^2$ as required in Theorem \ref{thm-complex-analyticity}, we also follow a discussion in \cite{Shao}.  Define 
\begin{equation*}
\begin{split}
H(\varepsilon)&=\left(\int_{|\xi|\geq s^2} \left|e^{F_{s^{-4}, \varepsilon}(\xi)}\widehat{f} \right|^2 d\xi \right)^{1/2}.
\end{split}
\end{equation*}
Note here that $s$ is fixed, but we have control over that term.  By the dominated convergence theorem we have that $H(\varepsilon)$ is continuous on $(0, \infty)$, and is therefore connected on $(0, \infty)$.  To see that $H$ is bounded uniformly on $(0, \infty)$, consider the function $G(x)=\frac{\omega}{2}x-Cx^2-Cx^3-Cx^4-Cx^5$ on $(0, \infty)$ (refer to lemma $3.2$ and choose $s$ large enough such that $o(1)\leq \omega$).  This is very clearly bounded above by lemma $3.2$.  Let $M=\sup_{x \in [0, \infty)}G(x)$.  Notice that $G$ is concave, implying that the line $y=\frac{M}{2}$ intersects $G$ in at least two places, call the first two $x_0$ and $x_1$; clearly $x_0>0$.  Since $H(\varepsilon)$ is connected, then $G^{-1}\left(\left[0, \frac{M}{2}\right]\right)$ is contained in either $[0, x_0]$ or $[x_1,\infty)$.  When $s$ is sufficiently large and $\varepsilon=1$, $H(1)<x_0$.  Thus $G^{-1}\left(\left[0, \frac{M}{2}\right]\right)\subset[0, x_0]$.  This implies that $H(\varepsilon)$ is uniformly bounded on $(0, \infty)$.  Thus by Fatou's lemma or the monotone convergence theorem, $e^{\mu \xi^2}\widehat{f} \in L^2$ for $\mu=s^{-4}$. This finishes the proof of Theorem \ref{thm-complex-analyticity}.

\end{document}